\newtheorem{theorem}{Theorem}[section]
\newtheorem{lemma}{Lemma}[section]
\newtheorem{definition}{Definition}[section]
\newtheorem{prop}{Proposition}[section]
\newcommand{\rd}{{\mathrm d}}
\newcommand{\tr}{{\mathrm {Tr}}}
\newcommand{\rP}{{\mathrm{P}}}
\newcommand{\rQ}{{\mathrm{Q}}}
\newcommand{\rT}{{\mathrm{T}}}
\newcommand{\F}{\mathscr{F}}
\newcommand{\A}{\mathscr{A}}
\newcommand{\Pc}{\mathscr{P}}
\newcommand{\vm}{{\bf m}}
\newcommand{\vx}{{\bf x}}
\newcommand{\vu}{{\bf u}}
\newcommand{\vM}{{\bf M}}
\newcommand{\calA}{{\cal A}}
\newcommand{\calF}{{\cal F}}
\newcommand{\calO}{{\cal O}}
\newcommand{\calU}{{\cal U}}
\newcommand{\calL}{{\mathcal{L}}}
\newcommand{\vTheta}{{\mbox{\boldmath$\Theta$}}}
\newcommand{\vtheta}{{\mbox{\boldmath$\theta$}}}
\newcommand{\argmax}{\operatornamewithlimits{argmax}}
\newcommand{\argmin}{\operatornamewithlimits{argmin}}
\newcommand{\bbX}{\mathbb{X}}
\newcommand{\Rb}{\mathbb{R}}
\newcommand{\Eb}{\mathbb{E}}
\newcommand{\Qb}{\mathbb{Q}}
\newcommand{\Pb}{\mathbb{P}}
\title{\LARGE \bf
Variational Inference for Stochastic Control of Infinite Dimensional Systems
}
\author{George I. Boutselis$^*$, Marcus Pereira\thanks{Equal contribution}~ and Evangelos  A. Theodorou \\
Autonomous Control and Decision Systems Laboratory \\
School of Aerospace Engineering\\
Georgia Institute of Technology
  }
\begin{document}

\maketitle

%\thispagestyle{empty}
%\pagestyle{empty}

%%%%%%%%%%%%%%%%%%%%%%%%%%%%%%%%%%%%%%%%%%%%%%%%%%%%%%%%%%%%%%%%%%%%%%%%%%%%%%%%
\begin{abstract}
  
    This paper develops a variational inference framework  for  control of infinite dimensional stochastic systems. We employ a measure theoretic approach which relies on the generalization of Girsanov's theorem, as well as the relation between relative entropy and free energy. The derived control scheme is applicable to a large class of stochastic, infinite dimensional systems, and can be used for trajectory optimization and model predictive control. Our work opens up new research avenues at the intersection of stochastic control, inference and information theory for dynamical systems described by stochastic partial differential equations.% Such systems can be found  in fluid mechanics, turbulence, plasma physics, partially observable stochastic control and open quantum systems.  

\end{abstract}

\section{Introduction}
In many practical applications, one faces the problem of controlling dynamical systems represented by stochastic partial differential equations (SPDEs). Examples can be found, for instance, in fluid mechanics, open quantum systems,  turbulence, plasma physics and partially observable stochastic control \cite{chow2007stochastic,da1992stochastic,StochasticNavierStokes_2004,Dumont_2017,Pardouxt_1980,PhysRevE_Bang1994,RAMA_SPDEs_Economics_2005,2017arXiv171009619K}.  Despite  the importance of such applications, the majority of works on computational stochastic control has been dedicated to finite dimensional systems.  These are systems  represented by stochastic differential equations (SDEs), and can be found in a plethora of applications from   robotics and  autonomous systems, to  computational neuroscience, biology and finance. In contrast, the literature is lacking works on scalable/implementable control schemes for stochastic, infinite dimensional systems.  To this end, this paper tries to bridge the gap between theory and implementation of stochastic control in infinite dimensions. Our approach is based on the free energy-relative entropy duality, and utilizes elements from  stochastic calculus in Hilbert spaces. The  resulting methodology avoids restictive assumptions about the problem formulation, and can be applied to a broad class of semilinear SPDEs.   
 
  Previous work in the area of control of SPDEs has focused on very  specific systems, and typically  consists of theoretical results on the existence and uniqueness of solutions. References \cite{DaPrato1999} and \cite{feng2006} share some common characteristics with our paper.  In particular, the former work investigates explicit solutions of the Hamilton-Jacobi-Bellman (HJB) equation for the stochastic Burgers equation.  The derivation is based on the exponential transformation of the value function, as well as the transformation of the backward HJB equation into a forward Kolmogorov equation. Then, the  explicit solution is recovered  through the forward Feynman-Kac lemma and a probabilistic representation of the value function. The work in \cite{feng2006}  extends the large deviation theory to infinite dimensional systems, and creates connections to HJB theory. The analysis therein shows that a   free energy-like function corresponds to the value function  of a deterministic optimal control problem under a specific cost functional. This connection is established  by proving  that the aforementioned free energy-like function satisfies the  HJB equation of an infinite horizon deterministic optimal control problem. 

 On the  computational side,  the  work in \cite{Christofides2009} proposes a  model predictive control methodology  for nonlinear dissipative SPDEs.  The key idea lies in model reduction; that is, the transformation of the original    SPDE  into a set of coupled stochastic differential equations. Once this finite dimensional representation is obtained, a  model predictive control methodology is developed and is then applied on the \textit{Kuramoto-Sivanshisky} equation. Another work on computational control of the aforementioned SPDE can be found in \cite{GOMES201733}. This approach shares similarities  with the one in  \cite{Christofides2009}, in that  a finite dimensional representation  of the SPDE is utilized, rendering thus the use of standard control theory feasible.   
 
To the best of our knowledge, the framework developed in this paper is the first  step towards  explicitly designing implementable, numerical stochastic  control  algorithms  in infinite dimensions. In contrast to prior work (see \cite{Christofides2009,GOMES201733}),  the proposed approach  treats SPDEs as time-indexed stochastic processes taking values  in an  infinite dimensional space. The core of our methodology relies on sampling stochastic paths from the dynamics, and computing the associated trajectory costs. Grounded on the theory of  stochastic calculus in Hilbert spaces, we are not restricted to any particular finite representation of the original system. Besides the theoretical implications, this fact is also benfecial from a computational standpoint. Specifically, the obtained expressions for our control updates are independent of the method used to actually simulate the SPDEs. This further implies that the required sampled paths can be obtained by employing the scheme that is more suitable to each particular problem setup (e.g., finite differences, Galerkin methods or finite elements). Finally, we note that this work can be considered as a generalization of the Path Integral and information theoretic control method \cite{todorov2009efficient,TheodorouCDC2012,entropy_2015,Kappen2005b}. As such, the proposed  stochastic control algorithm can be efficiently applied in a Model Predictive Control (MPC) fashion, and  inherits  the ability to deal with non-quadratic cost functions and nonlinear dynamics. 
 
   The rest of the paper is organized as follows: In section \ref{sec:Preliminaries} we provide some important definitions and theorems on infinite dimensional stochastic systems.  In section \ref{sec:Free_Energy_Relative_Entropy} we discuss the free energy and relative entropy relation. Based on this connection, section \ref{sec:Inference} derives our stochastic control method by performing  inference in Hilbert spaces. Furthermore, in subsection \ref{sec:Iterative} we develop an iterative version of our framework, which is subsequently tested in simulation in section \ref{sec:Experiments}. Section  \ref{sec:conclusions} concludes the paper.

%Finally, a special case of the backward Feynman-Kac lemma for  infinite dimensional  stochastic systems appears in \cite{Melnikova2014}. The analysis in \cite{Melnikova2014}  considers expectations of state functions of an infinite dimensional stochastic system evaluated only at the terminal time.   In our work here we consider expectations of integrals of state trajectories. These expectations are more general that then one consider in \cite{Melnikova2014} and  typically appear in stochastic optimal control problems  with objective  functions consisting of  terminal and  state dependent running cost terms. 

 %In an effort to fill this gap that arises due to the lack of  computational stochastic control and learning algorithms, in this paper we derive a stochastic optimization scheme that relies on the relation between free energy and relative entropy.  
  
\section{Preliminaries - Stochastic Calculus} 
\label{sec:Preliminaries}
 In this paper we consider infinite dimensional stochastic systems of the following form: 
 \begin{equation}\label{eq:Inf_SDE}
   \rd X = \calA X(t) \rd t+  F(X(t)) \rd t + G(X(t)) \rd W(t),  \quad X(0)= \xi
 \end{equation} 
 \noindent defined on the probability space $ (\Omega, \F, \Pb) $  with filtration $ \F_{t}, t \geq 0 $, for the time interval $t\in [0, T] $. Let, $H$ and $U$ Hilbert spaces, then  $\calA: D(\calA) \subset H \to H $ is a infinitesimal  generator , $ \xi $ is an  $ \F_{0}-$measurable $H-$valued random variable,   while    $  F : H \to H $ and $ G : U \to H  $ are nonlinear mappings that satisfy properly formulated Lipschitz and linear growth conditions (associated with the existence and uniqueness of solutions for infinite dimensional stochastic systems - see \cite[Theorem 7.2]{da1992stochastic}).  The term  $ W(t)\in U $ corresponds to a  $Q$-Wiener process that is defined based on the following proposition (see \cite[Chapter 4]{da1992stochastic}).  We use the notation $ X(\cdot, \omega) $  to denote a  state trajectory.
 
\begin{prop} Let $ \{e_{i}\}_{i=1}^{\infty} $  be a complete orthonormal system for the Hilbert Space $U$  such that $ Q e_{i}= \lambda_{i} e_{i}   $. Here, $ \lambda_{i} $ is the eigenvalue of $ Q\in L(U) $ that corresponds to  eigenvector $ e_{i} $, and $L(U)$ denotes the space of linear operators acting on $U$.  Then,  a   $Q$-Wiener process $ W(t) \in U$ satisfies the following properties:
 \begin{enumerate}
 \item  $ W $ is a Gaussian process on $ U $  with mean and variance: 
 \begin{equation}
 \Eb [W(t)] = 0, \quad   \Eb [W(t) W(t) ] = t Q, ~ t\geq 0. 
 \end{equation} 
 
 \item For arbitrary  $ t \geq 0 $, $ W $ has the following expansion:
 \begin{equation}\label{eq:Q_Wiener}
    W(t) = \sum_{j=1}^{\infty} \sqrt{\lambda_{j}} \beta_{j}(t) e_{j},
 \end{equation}
 where  $ \beta_{j}(t)  $  are real valued brownian motions that are mutually independent on $ (\Omega, \F, P). $% defined as:
 %\begin{equation}\label{eq:Brownian_Motion}
     %\beta_{j}(t) = \frac{1}{\sqrt{\lambda_{j}}} \langle W(t), e_{j} \rangle.
% \end{equation}  
 \end{enumerate}
\end{prop}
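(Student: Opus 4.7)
The plan is to verify the two listed properties by unpacking the standard definition of a $Q$-Wiener process---namely, a zero-mean Gaussian $U$-valued process with independent increments, continuous in $t$, whose covariance operator is $tQ$---and exploiting the spectral decomposition of $Q$ provided by the basis $\{e_j\}$.

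First I would verify (i) directly from the definition. The identity $\mathbb{E}[W(t)] = 0$ is built in, while the covariance relation is just a restatement of the fact that the covariance operator of $W(t)$ is $tQ$; in Hilbert-space notation one reads it as $\mathbb{E}\langle W(t), u\rangle \langle W(t), v\rangle = t \langle Qu, v\rangle$ for all $u,v \in U$, which is exactly the bilinear form associated to $tQ$.

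For (ii), I would define the candidate coefficients $\beta_j(t) := \langle W(t), e_j\rangle / \sqrt{\lambda_j}$ for every index with $\lambda_j > 0$, and check the defining properties of a real Brownian motion one by one. Gaussianity of each $\beta_j(t)$ is inherited from $W$ via the continuous linear functional $\langle \cdot, e_j\rangle$; continuity in $t$ and independence of increments are likewise transferred by linearity; and the covariance computation
\begin{equation*}
\mathbb{E}[\beta_i(t)\beta_j(s)] = \frac{\min(s,t)}{\sqrt{\lambda_i \lambda_j}}\langle Q e_i, e_j\rangle = \frac{\lambda_i \min(s,t)}{\sqrt{\lambda_i \lambda_j}}\delta_{ij} = \min(s,t)\,\delta_{ij}
\end{equation*}
shows simultaneously that $\mathrm{Var}(\beta_j(t)) = t$ and that distinct $\beta_i,\beta_j$ are uncorrelated; uncorrelatedness upgrades to full independence because the family $\{\beta_j(t)\}$ is jointly Gaussian.

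The remaining and most delicate step is to establish convergence of the series $\sum_j \sqrt{\lambda_j}\beta_j(t) e_j$ to $W(t)$. My approach is to look at the partial sums $S_N(t) := \sum_{j=1}^{N}\sqrt{\lambda_j}\beta_j(t) e_j$ and show $\mathbb{E}\|W(t) - S_N(t)\|_U^2 \to 0$. By Parseval's identity this squared error equals $\sum_{j>N}\mathbb{E}\langle W(t), e_j\rangle^2 = t\sum_{j>N}\lambda_j$, and since $Q$ must be trace class for a genuine $Q$-Wiener process to exist in $U$, $\mathrm{Tr}(Q) = \sum_j \lambda_j < \infty$ makes the tail vanish. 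The subtlety I expect to be the main obstacle is the bookkeeping for eigenvalues with $\lambda_j = 0$: on that part of the spectrum $\langle W(t), e_j\rangle$ has zero variance and hence vanishes almost surely, so one can either omit those indices from the sum or augment the family $\{\beta_j\}$ by an independent collection of standard real Brownian motions on those indices without altering $S_N(t)$. Handling that degeneracy cleanly, and promoting the $L^2$ convergence of partial sums to the claimed representation (uniformly on compacts in $t$, via Doob's inequality applied to the martingale $S_N$), is where the proof requires the most care.
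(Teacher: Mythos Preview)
Your proposal is a correct and well-organized sketch of the standard argument: part (i) is definitional, and for part (ii) you define the coordinate processes $\beta_j(t) = \lambda_j^{-1/2}\langle W(t), e_j\rangle$, verify the Brownian-motion axioms, upgrade uncorrelatedness to independence via joint Gaussianity, and control the tail of the series using the trace-class condition $\sum_j \lambda_j < \infty$. The treatment of the degenerate eigenvalues and the remark about Doob's inequality for uniform convergence are the right refinements.

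However, there is nothing to compare against: the paper does \emph{not} prove this proposition. It is stated without proof as a background fact, with an explicit pointer to \cite[Chapter~4]{da1992stochastic}. Your argument is essentially the one found in that reference (Da~Prato--Zabczyk), so in that sense you have reconstructed the proof the paper defers to, but the paper itself offers no derivation of its own.
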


% The connection between  \textit{U-valued $Q$}-Wiener processes and \textit{real-valued} brownian motion goes in  both ways. In particular, starting from a collection of independent  brownian motions  $  \{\beta_{j}\}_{i=1}^{\infty} $, and $ (Q, {e_{i},\lambda_{i}}) $  one can construct a  $\calQ$-Wiener processes  $ W(t) $  using  \eqref{eq:Q_Wiener}. In addition, given a $Q$-Wiener process and the corresponding eigenvalue-eigenvector pairs $({e_{i},\lambda_{i}})$ one can reconstruct independent brownian motions using \eqref{eq:Brownian_Motion}.
 
 In this paper we will make use of Girsanov's theorem for systems evolving on Hilbert spaces. To this end, let us introduce the Hilbert space $U_{0}:=Q^{1/2}(U)\subset U$ with inner product: $\langle u, v\rangle_{U_{0}}:=\langle Q^{-1/2}u, Q^{-1/2}v\rangle_{U}$, $\forall u,v\in U_{0}$. The following proposition is from \cite[Theorem 10.18]{da1992stochastic}:

\begin{prop}[Girsanov] \label{girs} Let $\Omega$ be a sample space with a $\sigma$-algebra $\mathcal{F}$. Consider the following $H$-valued stochastic processes:
\begin{align}
\rd X&=(\calA X+F(X)) \rd t+G(X)\rd W(t),\quad X(0)=x \label{X}\\
\rd\tilde{X}&=(\calA \tilde{X}+F(\tilde{X}))\rd t+\tilde{B}(\tilde{X})\rd t+G(\tilde{X})\rd W(t),\quad\tilde{X}(0)=x, \label{X_tilde}
\end{align}
where $W\in U$ is a Q-Wiener process with respect to the measure $\mathbb{P}$. Moreover, $\forall\Gamma\in C([0,T]; H)$ let the {\it law} of $X$ defined as $\mathcal{L}(X(\cdot,\omega)\in\Gamma):=\mathbb{P}(\omega\in\Omega|X(\cdot,\omega)\in\Gamma)$ . Similarly, the law of $\tilde{X}$, is defined as $\tilde{\calL}(\tilde{X}(\cdot,\omega)\in\Gamma):=\mathbb{P}(\omega\in\Omega|\tilde{X}(\cdot,\omega)\in\Gamma)$. Then
\begin{equation}
\begin{split}
&\tilde{\calL}(\omega)= \mathbb{E}_{\mathbb{P}}\big[\exp\big(\int_{0}^{T}\langle\psi(s), dW(s)\rangle_{U_{0}}-\frac{1}{2}\int_{0}^{T}||\psi(s)||_{U_{0}}^{2}ds\big)|X(\cdot)\in\Gamma\big],
\end{split}
\end{equation}
where $\psi(t):=G^{-1}(X(t))\tilde{B}(X(t))\in U_{0}$. Here, we write for brevity $\tilde{\calL}(\omega)\equiv\tilde{\calL}(\tilde{X}(\cdot,\omega)\in\Gamma)$.
\end{prop}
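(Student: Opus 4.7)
My strategy is the classical change-of-measure argument, adapted to the Hilbert-space setting. The key idea is to build an exponential martingale whose density, when used to tilt $\mathbb{P}$, transforms $X$ into a process with the extra drift $\tilde B$, and then to invoke uniqueness in law to identify this transformed process with $\tilde X$.

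First I would introduce the candidate density
\begin{equation*}
Z(t) := \exp\Bigl(\int_{0}^{t}\langle\psi(s),\, dW(s)\rangle_{U_{0}} - \tfrac{1}{2}\int_{0}^{t}\|\psi(s)\|_{U_{0}}^{2}\,ds\Bigr),
\end{equation*}
with $\psi(s) := G^{-1}(X(s))\tilde B(X(s)) \in U_{0}$, and verify that $Z$ is a genuine $\mathcal{F}_t$-martingale rather than merely a local one. The standard sufficient condition is a Novikov-type bound $\mathbb{E}_{\mathbb{P}}\bigl[\exp\bigl(\tfrac{1}{2}\int_0^T\|\psi(s)\|_{U_{0}}^{2}\,ds\bigr)\bigr]<\infty$, which one establishes using the linear-growth assumptions on $\tilde B$ and $G^{-1}$ together with standard moment bounds for the mild solution of \eqref{X}.

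Once $Z$ is a true martingale with $\mathbb{E}_{\mathbb{P}}[Z(T)]=1$, I would define the equivalent measure $d\tilde{\mathbb{P}} := Z(T)\,d\mathbb{P}$. By the Girsanov theorem for $Q$-Wiener processes \cite{da1992stochastic}, the shifted process
\begin{equation*}
\hat W(t) := W(t) - \int_0^t \psi(s)\,ds
\end{equation*}
is a $Q$-Wiener process on $(\Omega,\mathcal{F},\tilde{\mathbb{P}})$. Substituting $dW = d\hat W + \psi\,dt$ into \eqref{X} and using $G(X)\psi = \tilde B(X)$ shows that, under $\tilde{\mathbb{P}}$, the process $X$ solves exactly the SDE \eqref{X_tilde} driven by $\hat W$. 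Pathwise uniqueness for \eqref{X_tilde}, guaranteed by the Lipschitz conditions on the coefficients, then implies that the law of $X$ under $\tilde{\mathbb{P}}$ coincides with the law of $\tilde X$ under $\mathbb{P}$, i.e. $\tilde{\calL}(\Gamma) = \tilde{\mathbb{P}}(X\in\Gamma) = \mathbb{E}_{\mathbb{P}}[Z(T)\mathbf{1}_{\{X\in\Gamma\}}]$, which is the stated representation once the indicator is absorbed into the conditioning on $\{X(\cdot)\in\Gamma\}$.

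The main obstacle I anticipate is the Novikov-type integrability check, which in this infinite-dimensional setting is delicate: $\psi(s)$ lives in the Cameron-Martin subspace $U_0$ and depends pathwise on $X$, so one must carefully combine the growth bounds on $\tilde B$, the boundedness of $G^{-1}$ on the range of $G$, and semigroup-based moment estimates on the mild solution of \eqref{X}. A secondary but essential structural point is the implicit assumption that $\tilde B$ factors through $G$, so that $\psi = G^{-1}\tilde B$ is a well-defined element of $U_0$ and the stochastic integral in the definition of $Z$ makes sense; without this factorization the whole construction breaks down before the change of measure can even be attempted.
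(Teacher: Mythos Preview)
Your proposal is correct and follows essentially the same route as the paper's proof: define the exponential density, invoke the Hilbert-space Girsanov theorem from \cite{da1992stochastic} to obtain a new measure under which the shifted process $\hat W$ is a $Q$-Wiener process, rewrite \eqref{X} to see that $X$ solves \eqref{X_tilde} under the new measure, and then use uniqueness of solutions to identify laws. The paper's argument is terser and simply cites \cite[Theorem 10.18]{da1992stochastic} without discussing the Novikov-type integrability or the $\tilde B = G\psi$ factorization that you rightly flag as the delicate structural points.
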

\begin{proof}
Define the process:
\begin{equation}
\label{w_hat}
\hat{W}(t):=W(t)-\int_{0}^{t}\psi(s)\rd s.
\end{equation}
Based on \cite[Theorem 10.18]{da1992stochastic}, $\hat{W}$ is a Q-Wiener process with respect to a measure $\Qb$ determined by:
\begin{equation}
\label{girsanov_measure}
\begin{split}
\rd \Qb (\omega)&=\exp\big(\int_{0}^{T}\langle\psi(s),\rd W(s)\rangle_{U_{0}}-\frac{1}{2}\int_{0}^{T}||\psi(s)||_{U_{0}}^{2}\rd s\big)\rd\mathbb{P} \\ &=\exp\big(\int_{0}^{T}\langle\psi(s),\rd \hat{W}(s)\rangle_{U_{0}}+\frac{1}{2}\int_{0}^{T}||\psi(s)||_{U_{0}}^{2}\rd s\big)\rd\mathbb{P}.
\end{split}
\end{equation}
Now, using \eqref{w_hat}, eq. \eqref{X} is rewritten as:
\begin{equation}
\label{X_new}
\begin{split}
\rd X&=(AX+F(X))\rd t+G(X)\rd W(t) =(AX+F(X))\rd t+B(X)\rd t+G(X)\rd\hat{W}(t)
\end{split}
\end{equation}
Notice that  the above SPDE has the same form as \eqref{X_tilde}. Therefore, under the introduced measure $\Qb$, $X$ becomes equivalent to \eqref{X_tilde}. However, under the  measure $\mathbb{P}$, the SPDE in \eqref{X_new} behaves as the original system in \eqref{X}. In other words, eqs. \eqref{X} and \eqref{X_new} describe the same system on $(\Omega, \mathcal{F}, \mathbb{P})$. From the uniqueness of solutions and the aforementioned reasoning, one has
\[\Pb(\{\tilde{X}\in\Gamma\}) = \Qb(\{X\in\Gamma\}).\]
The result follows from \eqref{girsanov_measure}.
\end{proof}

To conclude this section, we note that when $\lambda_{j}=1$, $\forall j$, $W(t)$ corresponds to a cylindrical Wiener process (space-time white noise). In that case, the series in \eqref{eq:Q_Wiener} converges in another Hilbert space $U_{1}\supset U$, when the inclusion $\iota:U\rightarrow U_{1}$ is Hilbert-Schmidt. For more details see \cite{da1992stochastic}.

\section{Relative Entropy and Free Energy Dualities in Hilbert Spaces}
\label{sec:Free_Energy_Relative_Entropy}

In this section we provide the relation between free energy and relative entropy. The relation is valid for general probability measures including  measures defined on path spaces induced by infinite dimensional stochastic systems.  Here we will consider the general  measures $ \calL $ and $ \tilde{\calL} $.  
\begin{definition}{\label{def:Free_Energy} \textit{(Free Energy)} Let  $   \calL \in \Pc $ a probability measure and let  the function   $J \equiv J(X(\cdot, \omega) ): L^{p} \to  \Rb_{+}$ be a measurable function. Then the following term:
  \begin{equation}
V  =\frac{1}{\mu} \log_e   \int_{\Omega} \exp(\mu J )   \rd \calL (\omega),
  \end{equation}
 \noindent     is called the {\it free energy}\footnote{The function  $ \log_{e}  $ denotes the natural  logarithm.} of  $ J   $    with respect to $   \calL $ and  $ \mu \in \Rb$.}  
 \end{definition}  

  \begin{definition}{\label{def:Entropy} \textit{(Generalized Entropy)} Let  $   \calL  \in \Pc$  and  $  \tilde{\calL} \in \Pc$ then the relative entropy of $  \tilde{\calL} $  with respect to $   \calL $ is defined as: 
\[    S\left( \tilde{\calL }|| \calL  \ \right) = \left\{
\begin{array}{l l}
  -\int_{\Omega}   \frac{    \rd \tilde{\calL}(\omega) }{\rd \calL(\omega)}  \log_e  \frac{ \rd \tilde{\calL}(\omega)}{\rd \calL (\omega)}         \rd \calL(\omega),
  \mbox{if $\tilde{\calL}<<\calL $},  \\
  +\infty,  \quad \mbox{otherwise},\\ \end{array} \right. \]
where ``$<<$'' denotes absolute continuity of $\tilde{\calL}$ with respect to $\calL$ and
$\mathcal{L}_1$  denotes the space of Lebesgue measurable functions on  $ [0, \infty)$. We say that $ \tilde{\calL} $ is \textit{absolutely continuous} with respect to $ \calL $ and we write  $\tilde{\calL}<<\calL $ if  $ \calL(B) = 0 \Rightarrow \tilde{\calL}(B) = 0, ~ \forall B \in \F$. }
 \end{definition}  
 The  free energy and relative entropy relationship is expressed by the theorem that follows:
\begin{theorem} \textit{  Let  $( \Omega, {\F}) $  be a measurable space. Consider  $ \calL , \tilde{\calL}  \in  \Pc $ and    the definitions of free energy and relative entropy as expressed in  definitions \ref{def:Free_Energy}  and \ref{def:Entropy}.  Under the assumption that $\Qb_{\bbX}<<\Pb_{\bbX} $,  the following inequality holds:}
    \begin{align} \label{eq:Legendre}
    & - \frac{1}{\rho}   \log_e \Eb_{\calL} \bigg[ \exp( -\rho {J} )  \bigg]  \leq \bigg[    \Eb_{\tilde{\calL}}\left({J} \right)  -\frac{1}{\rho} S \left( \tilde{\calL} ||  \calL \right)  \bigg] ,
    \end{align}   
 \noindent  \textit{where $ \mathbb{E}_{\calL}, \mathbb{E}_{\tilde{\calL}}  $  is the expectation under the probability measure  $ \calL,\tilde{\calL} $  respectively and $ \rho \in \Rb_{+}$ and $ J :  L^{p} \to  \Rb_{+} $.  The inequality in \eqref{eq:Legendre} is the so called   Legendre Transform.   }  
\end{theorem}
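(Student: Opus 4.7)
The plan is to prove this via the classical Donsker--Varadhan/Gibbs variational argument, which works verbatim on abstract measure spaces and therefore also on the Hilbert-space-valued path space induced by \eqref{eq:Inf_SDE}. The core idea is a one-line Jensen inequality after a change of measure. The role of the hypothesis $\tilde{\calL}\ll\calL$ is to guarantee the existence of a Radon--Nikodym derivative; when absolute continuity fails, $S(\tilde{\calL}\|\calL)=+\infty$ by definition and the inequality holds trivially, so without loss of generality I would assume $\tilde{\calL}\ll\calL$.

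First I would introduce the density $h(\omega):=\tfrac{\rd\tilde{\calL}}{\rd\calL}(\omega)$, which is $\calL$-a.e.\ nonnegative and strictly positive on the $\tilde{\calL}$-support. Then I would rewrite the free-energy integral by a change of measure:
\begin{equation*}
\Eb_{\calL}\bigl[\exp(-\rho J)\bigr]
= \int_{\Omega}\exp(-\rho J(\omega))\,\rd\calL(\omega)
= \int_{\{h>0\}}\frac{\exp(-\rho J(\omega))}{h(\omega)}\,\rd\tilde{\calL}(\omega)
= \Eb_{\tilde{\calL}}\!\left[\frac{\exp(-\rho J)}{h}\right].
\end{equation*}
Since $-\log_e(\cdot)$ is convex on $\Rb_{+}$, Jensen's inequality applied under $\tilde{\calL}$ gives
\begin{equation*}
-\log_e\Eb_{\tilde{\calL}}\!\left[\frac{\exp(-\rho J)}{h}\right]
\leq \Eb_{\tilde{\calL}}\!\left[-\log_e\frac{\exp(-\rho J)}{h}\right]
= \rho\,\Eb_{\tilde{\calL}}(J) + \Eb_{\tilde{\calL}}[\log_e h].
\end{equation*}
The last term is precisely $-S(\tilde{\calL}\|\calL)$ by Definition \ref{def:Entropy}, since $\Eb_{\tilde{\calL}}[\log_e h]=\int h\log_e h\,\rd\calL$. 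Dividing through by $\rho>0$ then yields the claimed inequality \eqref{eq:Legendre}.

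The only obstacle that requires comment is technical rather than conceptual: one must verify that the expectations above are well defined so that Jensen's inequality is applicable in its integral form. This reduces to checking that $J\geq 0$ makes $\exp(-\rho J)$ a bounded $\calL$-integrand (automatic since $\rho>0$), and that $\log_e h$ is $\tilde{\calL}$-integrable whenever $S(\tilde{\calL}\|\calL)>-\infty$; if the negative part of $\log_e h$ is not integrable then $S(\tilde{\calL}\|\calL)=-\infty$, making the right-hand side of \eqref{eq:Legendre} equal to $+\infty$ and the bound again trivial. Since none of these steps depend on the finite-dimensional structure of $\Omega$, the argument extends without modification to the measure $\calL$ on $C([0,T];H)$ induced by the SPDE \eqref{eq:Inf_SDE}, which is what will be needed in the subsequent Hilbert-space inference derivation.
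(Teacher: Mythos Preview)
The paper does not actually prove this theorem; it is stated without proof as a known variational identity, and the surrounding discussion only verifies that the Gibbs measure \eqref{eq:Gibbs} attains equality, referring to \cite{entropy_2015}. Your Donsker--Varadhan/Jensen argument is the standard route to this inequality and is essentially correct.

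One small technical slip worth fixing: the displayed identity
\[
\int_{\Omega}\exp(-\rho J)\,\rd\calL \;=\; \int_{\{h>0\}}\frac{\exp(-\rho J)}{h}\,\rd\tilde{\calL}
\]
is in general only the inequality $\geq$, because $\calL$ may place positive mass on $\{h=0\}$ while $\tilde{\calL}$ cannot. This does no harm to your argument, since $-\log_e$ is decreasing and the extra mass only sharpens the final bound; but you should either write $\geq$ there, or add the assumption that $\calL$ and $\tilde{\calL}$ are mutually absolutely continuous. The latter is precisely what Girsanov (Proposition~\ref{girs}) delivers for the path measures used later in the paper, so it is the natural hypothesis in this setting.
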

 By defining the free energy  as temperature $  T = \frac{1}{\rho} $  the Legendre transformation has the form: 
\begin{equation}\label{eq:Legendre_StatMech}
V  \leq E  - T S,
\end{equation}
which has statistical mechanics interpretation.  The equilibrium probability measure has the classical form: 
\begin{equation}\label{eq:Gibbs}
\rd \calL^{*}(\omega) = \frac{\exp( - \rho J) \rd \calL(\omega)}{\int_{\Omega} \exp( - \rho J)  \rd \calL(\omega)},
\end{equation}
To verify that the measure in \eqref{eq:Gibbs} is the optimal measure it suffices to substitute \eqref{eq:Gibbs} in \eqref{eq:Legendre} and show that the inequality collapses to an equality \cite{entropy_2015}.   The statistical physics  interpretation of inequality \eqref{eq:Legendre_StatMech} is that, maximization of entropy results in reduction of the available energy. At the thermodynamic equilibrium the entropy reaches its maximum and the inequality collapses to equality. It can be shown that when the measures $ \tilde{\calL} $ and $ \calL $ are associated to paths generated by control and uncontrolled semi-linear SPDEs, then the free energy is value function that satisfies the HJB equation of an infinite dimensional stochastic optimal control problem. This observation motivates the use of  
\eqref{eq:Gibbs} for the development of stochastic control algorithms.    

%In section \ref{} we will further discuss how this statistical mechanics interpretation related to stochastic control of infinite-dimensional systems.

\section{Variational Inference and Control in Hilbert Spaces}
\label{sec:Inference}

In this section we will derive our numerical algorithm for controlling stochastic infinite dimensional systems.To simplify our expressions, we will consider without loss of generality SPDEs with additive noise. Let the uncontrolled and controlled version of an $H$-valued process be given respectively by:
\begin{equation}\label{eq:SPDEs_NoControl}
\rd X(t) = (\A X   + F(X(t)))  \rd t +   \frac{1}{\sqrt{\rho}}  \rd W(t),~\text{and} ~ \rd \tilde{X} (t) = ( \A \tilde{X}   + F(\tilde{X}(t))  + \calU(t) ) \rd t  + \frac{1}{\sqrt{\rho}} \rd  W(t)
\end{equation}
% \begin{equation}\label{eq:SPDEs_Control}
% \rd \tilde{X} (t) = ( \A \tilde{X}   + F(\tilde{X}(t))  + \calU(t) ) \rd t  + \frac{1}{\sqrt{\rho}} \rd  W(t),
% \end{equation}
 both with initial condition: $
X(0) =\tilde{X}(0)= \xi$. Here, $W\in U=H$ is a $Q$-Wiener process on $(\Omega, \mathcal{F}, \mathbb{P})$ with covariance operator $Q\in L(U)$. As in the previous section, the uncontrolled dynamics are equivalent to:
\begin{equation}\label{eq:SPDEs_NoControl_new}
\rd X(t) = (\A X   + F(X(t))+\calU(t))  \rd t +   \frac{1}{\sqrt{\rho}}  \rd \hat{W}(t),
\end{equation}
with respect to $\mathbb{P}$. Here, $\hat{W}$ is a $Q$-Wiener process with respect to another measure $\mathbb{Q}$. The law of the uncontrolled states, $\mathcal{L}(\cdot)$, defines a measure on the path space via \eqref{eq:SPDEs_NoControl} as  $\mathcal{L}(\omega):=\mathbb{P}(\omega|X(\cdot,\omega)\in\Gamma)$. Similarly, the law of controlled trajectories is $\tilde{\mathcal{L}}(\omega):=\mathbb{P}(\omega|\tilde{X}(\cdot,\omega)\in\Gamma)$. Finally, we suppose that there exists an optimal controller $\calU^{*}$ which corresponds to the law of optimal trajectories, $\mathcal{L}^{*}(\cdot)$.

In this section we derive controllers by formulating a new optimization problem in which we make use of the measure theoretic approach. We are looking for a control input $\calU(\cdot)$ that minimizes the distance to the optimal path law. That is:
 \begin{align}
    \calU^{*}(\cdot) =   \argmax_{\calU(\cdot)}  S(\mathcal{L}^{*}|| \tilde{\mathcal{L}}). 
  \end{align}
 %Henceforth, we seek
 % \begin{align}
 %   \calU^{*} =   \argmax_{\calU(\cdot)}  S(\calL^{*}|| \tilde{\calL}^{*})  
 % \end{align}
  Under the parameterization $  \calU = \calU(X(t);\vtheta)  $ the problem above will take the form:
     \begin{align*}
    \vtheta^{*}  =   \argmax   \bigg[ -  \int_{\Omega}   \frac{    \rd \calL^{*}(\omega) }{\rd  \tilde{\calL}(\omega)}  \log_e  \frac{ \rd \calL^{*}(\omega)}{\rd \tilde{\calL}(\omega)}  \rd  \tilde{\calL}(\omega) \bigg] 
   %     & =   \argmin   \bigg[   \int_{\Omega}   \frac{    \rd \mathbb{Q}^{*}(\omega) }{\rd  \mathbb{Q}(\omega)}  \log_e  \frac{ \rd \mathbb{Q}^{*}(\omega)}{\rd  \mathbb{Q}(\omega)}  \rd  \mathbb{Q}(\omega) \bigg]  \\
           =   \argmin   \bigg[   \int_{\Omega}   \ \log_e   \frac{ \rd \calL^{*}(\omega)}{\rd \tilde{\calL}(\omega)}  \rd  \calL^{*}(\omega) \bigg].
  \end{align*}
To perform the optimization we will consider the chain rule property for the Radon-Nikodym derivative.For instance, this results in the following expression:
\begin{equation}
 \frac{ \rd \calL^{*}(\omega)}{\rd \tilde{\calL}(\omega)}   =  \frac{ \rd \calL^{*}(\omega)}{\rd \calL(\omega)}   \frac{ \rd \calL(\omega)}{\rd \tilde{\calL}(\omega)}. 
\end{equation}
 Note that the first derivative is given by \eqref{eq:Gibbs} while the second derivative is given by the change of measure between control and uncontrolled infinite dimensional stochastic dynamics.  Based on the discussion of the previous section, $\tilde{\mathcal{L}}(\omega)=\mathbb{Q}(\omega|X(\cdot,\omega)\in\Gamma)$ and $\mathcal{L}^{*}(\omega)=\mathbb{Q}^{*}(\omega|X(\cdot,\omega)\in\Gamma)$, where $\mathbb{Q}^*$ is properly defined. From  Proposition \ref{girs}, this  is computed by

\begin{equation}
\label{radon1}
\frac{ \rd \tilde{\calL}}{ \rd \calL } =  \frac{ \rd \mathbb{Q}}{ \rd \mathbb{P} }=\exp\bigg(\int_{0}^{T}\langle\psi(s), \rd W(s)\rangle_{U_{0}}-\frac{1}{2}\int_{0}^{T}||\psi(s)||_{U_{0}}^{2} \rd s\bigg),
\end{equation}
where $\psi(t):=\sqrt{\rho}\mathcal{U}(t)\in U$. In this paper we will parameterize our infinite dimensional control as follows:
\begin{equation}
\label{param_u}
\calU(t) = \sum_{\ell=1}^{N} m_{\ell} u_{\ell}(t)\in U\equiv H,
\end{equation}
so that
\begin{equation}\label{eq:ParameterControl1}
\calU(t)(\vx) = \sum_{\ell=1}^{N} m_{\ell}(\vx) u_{\ell}(t) =\vm(\vx)^{\rT} \vu(t)\in\mathbb{R}.
\end{equation}
Here, $ m_{\ell}\in U $ are design  functions that specify how the  actuation is incorporated into the infinite dimensional dynamical system. Under this parameterization, the change of measure between the two SPDEs  takes the form:
\begin{equation}
\label{radon_param}
\begin{split}
 \frac{ \rd \mathbb{Q}}{ \rd \mathbb{P} }=\exp\bigg(\sqrt{\rho}\int_{0}^{T}\vu(t)^{\top}\bar{ \vm}(t)-\rho\frac{1}{2}\int_{0}^{T}\vu(t)^{\top}\vM\vu(t)\rd t\bigg),
\end{split}
\end{equation}
where
\begin{equation}
\label{small_m}
\bar{ \vm}(t):=\bigg[\langle m_{1},\rd W(t)\rangle_{U_{0}},...,\langle m_{N},\rd W(t)\rangle_{U_{0}}\bigg]^{\top}\in\mathbb{R}^{N},
\end{equation}
\begin{equation}
\label{big_M}
\vM\in\mathbb{R}^{N\times N},\quad (\vM)_{ij}:=\langle m_{i},m_{j}\rangle_{U_{0}}.
\end{equation}

The following theorem provides the optimal control  $  \vu^{*} $ for the case of the controlled SPDEs of the form in \eqref{eq:SPDEs_NoControl}. 
\begin{lemma}\label{TheoremSOC} (Variational  Stochastic Control) Consider the controlled SPDE in \eqref{eq:SPDEs_NoControl} and let the following objective  function:
\begin{equation} 
   \vu^{*} =   \argmax  S(\calL^{*}|| \tilde{\calL})  
\end{equation}
The probability measure  $  \calL^{*} $ is  induced by the optimally controlled SPDE   in \eqref{eq:SPDEs_NoControl} and has the  form:
\begin{equation}\label{eq:optimal_measure}
\rd \calL^{*}(\omega) = \frac{\exp( - \rho J(Xq)) \rd \calL(\omega)}{\int_{\Omega} \exp( - \rho J(X))  \rd \calL(\omega)},
\end{equation}
The probability measure  $ \tilde{\calL}$ is induced by controlled trajectories of the SPDEs  when infinite dimensional control  $ \calU(t) $  is determined by \eqref{eq:ParameterControl2} and $  \vu(t) $  in  \eqref{eq:ParameterControl1}  is  parameterized as  follows: 
\begin{equation} \label{eq:ParameterControl2}
 \vu(t) =
  \vu_{i}\equiv \vu(t_i)    \quad  \text{if            } \quad i \Delta t \le t < (i+1)\Delta t ,\quad\forall t\in[0,T]
\end{equation}
\noindent with $i = \{0, 1, \hdots L\}$.  Under the aforementioned representation, the optimal control  is provided by the following expression:
\begin{equation}\label{eq:optimalcontrol}
\vu^{*}_{i} =      \frac{1}{\sqrt{\rho} \Delta t}   \vM^{-1}  \Eb_{\calL}   \bigg[   \frac{\exp( - \rho J) }{  \Eb_{\calL} [ \exp( - \rho J)] }  \delta \vu_{i} \bigg], \quad \text{and} \quad 
 \delta \vu_{i} :=  \int_{t_i}^{t_{i+1}}\bar{ \vm}(t).
 \end{equation}
\end{lemma}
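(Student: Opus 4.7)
The plan is to expand the relative-entropy objective $S(\calL^{*}||\tilde{\calL})$, isolate the $\vu$-dependent part, and minimize it directly. By the chain rule for Radon--Nikodym derivatives,
$$\frac{\rd \calL^{*}}{\rd \tilde{\calL}} \;=\; \frac{\rd \calL^{*}}{\rd \calL}\cdot\frac{\rd \calL}{\rd \tilde{\calL}},$$
where the first factor equals $\exp(-\rho J)/Z$ with $Z := \Eb_{\calL}[\exp(-\rho J)]$ from \eqref{eq:optimal_measure}, and the second factor is the reciprocal of the Girsanov density \eqref{radon_param}. Taking logarithms,
$$\log\frac{\rd \calL^{*}}{\rd \tilde{\calL}} \;=\; -\rho J - \log Z \;-\;\sqrt{\rho}\int_{0}^{T}\vu(t)^{\top}\bar{\vm}(t) \;+\; \frac{\rho}{2}\int_{0}^{T}\vu(t)^{\top}\vM\,\vu(t)\,\rd t.$$

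I would then write $S(\calL^{*}||\tilde{\calL}) = -\Eb_{\calL^{*}}[\log(\rd \calL^{*}/\rd \tilde{\calL})]$ and discard every term that is $\vu$-independent; maximizing $S$ reduces to minimizing
$$\mathcal{J}(\vu) := \Eb_{\calL^{*}}\!\Bigl[-\sqrt{\rho}\int_{0}^{T}\vu(t)^{\top}\bar{\vm}(t) \;+\; \tfrac{\rho}{2}\int_{0}^{T}\vu(t)^{\top}\vM\,\vu(t)\,\rd t\Bigr].$$
A change of measure $\rd\calL^{*} = (\exp(-\rho J)/Z)\,\rd\calL$ re-expresses $\mathcal{J}$ as an expectation against $\calL$, the measure of the uncontrolled SPDE that we can actually sample. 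Substituting the piecewise-constant parameterization $\vu(t)=\vu_{i}$ on $[t_{i},t_{i+1})$ collapses the stochastic integral into $\int_{t_i}^{t_{i+1}}\bar{\vm}(t) \equiv \delta\vu_{i}$ and the quadratic into $\Delta t\,\vu_{i}^{\top}\vM\vu_{i}$, so $\mathcal{J}$ decouples into a sum of strictly convex quadratics, one per $\vu_{i}$. Pulling the deterministic $\vu_{i}$ out of the expectation, using $\Eb_{\calL}[\exp(-\rho J)/Z]=1$, and zeroing the gradient in $\vu_{i}$ gives the first-order condition
$$\rho\Delta t\,\vM\,\vu_{i} \;-\; \sqrt{\rho}\,\Eb_{\calL}\!\Bigl[\frac{\exp(-\rho J)}{Z}\,\delta\vu_{i}\Bigr] \;=\; 0,$$
which upon inverting $\vM$ recovers \eqref{eq:optimalcontrol} exactly.

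The main obstacle, and the step that requires genuine care, is the bookkeeping across the three measures $\calL,\tilde{\calL},\calL^{*}$ and the two Wiener processes $W,\hat{W}$ appearing in Proposition \ref{girs}: one has to verify that $\rd \calL/\rd \tilde{\calL}$ is the \emph{reciprocal} of the Girsanov density obtained by applying that proposition to the pair \eqref{eq:SPDEs_NoControl}, and that after the chain rule the driving noise in $\bar{\vm}(t)=[\langle m_{1},\rd W(t)\rangle_{U_{0}},\ldots]^{\top}$ is still the $Q$-Wiener process under $\calL$ (not the shifted $\hat{W}$ under $\tilde{\calL}$), so that $\Eb_{\calL}[\,\cdot\,]$ is legitimately approximated by rollouts of the uncontrolled dynamics. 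Once this measure-theoretic setup is fixed, strict convexity in each $\vu_{i}$ and the independence of the deterministic controls from $\omega$ make the remainder routine.
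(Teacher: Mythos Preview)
Your proposal is correct and follows essentially the same approach as the paper: chain rule on the Radon--Nikodym derivative, isolate the $\vu$-dependent terms via the Girsanov density \eqref{radon_param}, insert the step-function parameterization, and solve the resulting quadratic in each $\vu_i$. The only cosmetic difference is that you change measure from $\calL^{*}$ to $\calL$ \emph{before} minimizing, whereas the paper first minimizes under $\calL^{*}$ to obtain $\vu_i^{*}=\tfrac{1}{\sqrt{\rho}\Delta t}\vM^{-1}\Eb_{\calL^{*}}[\delta\vu_i]$ and only then applies the Radon--Nikodym derivative; the two orderings are equivalent.
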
 
\begin{proof}
 Under the parameterization $  \calU(\vx,t) =  \vm(\vx)^{\rT} \vu(t)  $ the problem above will take the form:
     \begin{align*}
    \vu^{*}  =   \argmin   \bigg[   \int_{\Omega}   \ \log_e  \frac{ \rd \calL^{*}(\omega)}{\rd \tilde{\calL}(\omega)}  \rd \calL^{*}(\omega) \bigg]  &=    \argmin   \bigg[   \int_{\Omega}   \ \log_e  \frac{ \rd \calL^{*}(\omega)}{\rd \calL(\omega)} \frac{ \rd \calL(\omega)}{\rd \tilde{\calL}(\omega)}  \rd \calL^{*}(\omega) \bigg]. 
%                  & =    \argmin   \bigg[   \int_{\Omega}    \log_e  \frac{ \rd \rQ_{h}^{*}(\omega)}{\rd \rP_{h}(\omega)}  + \log_{e} \frac{ \rd \rP_{h}(\omega)}{\rd \rQ_{h}(\omega)}  \rd \rQ_{h}^{*}(\omega) \bigg]  \\
 %             & =   \argmin   \bigg[   \int_{\Omega}     \log_{e} \frac{ \rd \rP_{h}(\omega)}{\rd \rQ_{h}(\omega)}  \rd \rQ_{h}^{*}(\omega) \bigg] \\
      %        & =  \argmin  \Eb_{\calL^{*}}  \bigg[     \log_{e} \frac{ \rd \calL(\omega)}{\rd \tilde{\calL}(\omega)}  \bigg] 
   %            & =  \argmin \bigg[    \int_{\Omega} \int_{0}^{T} \int_{O}    \vm(\vx)^{\rT} \vu(t) \rd \hat{\omega}(\vx,t)
  \end{align*} 
By using \eqref{radon_param} minimization of the last expression is equivalent to the minimization of the  expression: 
     \begin{align*}
 \Eb_{\calL^{*}}  \bigg[     \log_{e} \frac{ \rd \calL(\omega)}{\rd \tilde{\calL}(\omega)}  \bigg]   &=   -\sqrt{\rho} \Eb_{\calL^{*}}  \bigg[  \int_{0}^{T}\vu(t)^{\top}\bar{ \vm}(t)\bigg]+\frac{1}{2} \rho\Eb_{\calL^{*}}  \bigg[  \int_{0}^{T}\vu(t)^{\top}\vM\vu(t)\rd t\bigg].
  \end{align*} 
 The goal is to find the function $\vu^*(\cdot)$ which minimizes. However, since we inevitably apply the control in discrete time it suffices to consider the class of step functions:
    \begin{align*}
 & \Eb_{\calL^{*}}  \bigg[     \log_{e} \frac{ \rd \calL(\omega)}{\rd \tilde{\calL}(\omega)}  \bigg]  =  -\sqrt{\rho} \sum_{i=0}^{L-1}\vu_{i}^{\top}\Eb_{\calL^{*}}  \bigg[  \int_{t_{i}}^{t_{i+1}}\bar{ \vm}(t)\bigg]+
\frac{1}{2}\rho\sum_{i=0}^{L-1}\vu_{i}^{\top}\vM\vu_{i}\Delta t,
  \end{align*} 
 where we have used the fact that $\vM$ is symmetric and constant with respect to time. Minimization of the expression above with respect to  $ \vu_{i}$ results in: 
  \begin{equation}
      \vu_{i}^{*} =  \frac{1}{\sqrt{\rho} \Delta t  }\vM 
       ^{-1}  \Eb_{\calL^{*}}  \bigg[\int_{t_{i}}^{t_{i+1}}\bar{ \vm}(t)\bigg].
  \end{equation}
 Since we cannot sample from the $ \calL^{*}$, we need to
change the expectation to be an expectation with respect to
the uncontrolled dynamics,  $ \calL$. We can then directly sample
trajectories from $ \calL $ to approximate the controls. The change
in expectation is achieved by applying the Radon-Nikodym
derivative. The result is equation \eqref{eq:optimalcontrol}. 
 \end{proof}

\subsection{Iterative  Control of SPDEs}
\label{sec:Iterative}
     We derive an iterative scheme  that can be used for  stochastic optimization and  be  implemented in a receding horizon  fashion.   In particular, let us  consider the controlled dynamics at iteration $ i^{th}$  given by: 
\begin{equation}\label{eq:SPDEs_Control_Iter_i}
\rd X^{(i)}(t) = ( \A X^{(i)}   + F(X^{(i)})  + \calU^{(i)}(t) ) \rd t  + \frac{1}{\sqrt{\rho}} \rd  W(t),
\end{equation}
where $ \calU^{i}(t) $  is the control at the $ i^{th} $  iteration.  As we have already shown, the uncontrolled dynamics can be equivalently written as:
\begin{equation}\label{eq:SPDEs_NoControl_new}
\begin{split}
\rd X(t)& = (\A X   + F(X(t)))  \rd t +   \frac{1}{\sqrt{\rho}}  \rd W(t) =(\A h   + F(X(t))+\calU^{(i)}(t) )  \rd t +   \frac{1}{\sqrt{\rho}}  \rd W^{(i)}(t),
\end{split}
\end{equation}
where $W^{(i)}$ is a $Q$-Wiener process with respect to some measure $\mathbb{Q}^{(i)}$ with:
\begin{equation}
\label{w_hat_i}
W^{(i)}(t):=W(t)-\int_{0}^{t}\rho\calU^{(i)}(s)\rd s,
\end{equation}
Again here we define the path measure $\calL^{i}(\omega):=\mathbb{P}(\omega|X^{(i)}(\cdot,\omega)\in\Gamma)  $ induced by  \ref{eq:SPDEs_Control_Iter_i}  and the path measure $\calL(\omega):=\mathbb{P}(\omega|X(\cdot,\omega)\in\Gamma)  $ induced by \eqref{eq:SPDEs_NoControl_new}. Then according to \ref{girs} we have:   
\begin{equation}
\label{radon_i}
\begin{split}
\frac{ \rd \mathbb{\calL}^{(i)}}{ \rd \calL } = \frac{ \rd \mathbb{Q}^{(i)}}{ \rd \mathbb{P} }=\exp\big(\sqrt{\rho}\sum_{k=0}^{L-1}\vu_k^{(i)\top}\int_{t_k}^{t_{k+1}}\bar{ \vm}^{(i)}(t)+\rho\frac{1}{2}\sum_{k=0}^{L-1}\vu_k^{(i)\top}\vM\vu_k^{(i)}\Delta t\big),
\end{split}
\end{equation}
where
\begin{equation}
\label{small_m}
\mathbb{R}^{N}\ni\bar{ \vm}^{(i)}(t):=\bigg[\langle m_{1},\rd W^{(i)}(t)\rangle_{U_{0}},...,\langle m_{N},\rd W^{(i)}(t)\rangle_{U_{0}}\bigg]^{\top},
\end{equation}

 \begin{lemma} (Iterative Stochastic Control) Consider the controlled SPDE in \eqref{eq:SPDEs_NoControl} and the parameterization of the control as specified by \eqref{eq:ParameterControl1} and  \eqref{eq:ParameterControl2}. The iterative control scheme is given by the following expression:
\begin{equation}\label{eq:Iterative_optimalvariation1}
\vu^{*}_{j}{}^{(i+1)} =  \vu^{*}_{j}{}^{(i)}   +   \frac{1}{\sqrt{\rho} \Delta t}   \vM^{-1}  \Eb_{\calL^{(i)}}   \bigg[   \frac{\exp( - \rho \tilde{J}) }{  \Eb_{\calL^{(i)}} [\exp( - \rho  \tilde{J}] }  \delta \vu^{(i)}_{j}    \bigg],\quad and \quad 
 \delta \vu^{(i)}_j =   \int_{t_j}^{t_{j+1}}\bar{ \vm}^{(i)}(t). 
\end{equation}
and
 \begin{equation} \tilde{J} = J + \zeta, 
 \end{equation}
with the control path dependent  function   $ \zeta^{(i)} = \zeta(\calU^{(i)}): [0,T] \times \calO \to \Rb  $ defined as follows:  
\begin{align}\label{eq:zeta}
\zeta(\calU^{(i)}) = &\frac{1}{\sqrt{\rho}}\sum_{k=0}^{L-1}\vu_k^{(i)\top}\int_{t_k}^{t_{k+1}}\bar{ \vm}^{(i)}(t)+\frac{1}{2}\sum_{k=0}^{L-1}\vu_k^{(i)\top}\vM\vu_k^{(i)}\Delta t.
\end{align}
The expectation in \eqref{eq:Iterative_optimalvariation1} is taken with respect to the probability path  measure $ \calL^{(i)} $  induced  by sampled  trajectories generated using \eqref{eq:SPDEs_Control_Iter_i}.
\end{lemma}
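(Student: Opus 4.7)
The plan is to mimic the derivation of Lemma \ref{TheoremSOC} verbatim, but with the role of the uncontrolled reference measure $\calL$ replaced by the controlled measure $\calL^{(i)}$, and then express everything in terms of quantities that can be sampled from trajectories of \eqref{eq:SPDEs_Control_Iter_i}. The key lever is the chain rule for Radon–Nikodym derivatives, together with the factorization of $\exp(-\rho \tilde J)$ that absorbs both the Gibbs weight of Lemma \ref{TheoremSOC} and the Girsanov correction between $\calL$ and $\calL^{(i)}$.

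First I would repeat the variational argument of Lemma \ref{TheoremSOC} starting from $\vu^{*(i+1)} = \argmax S(\calL^{*} \| \tilde{\calL}^{(i+1)})$, where $\tilde{\calL}^{(i+1)}$ is the law induced by controlling \eqref{eq:SPDEs_NoControl} with $\calU^{(i+1)}$. Since $\calL^{*}$ does not depend on the iteration index, the same computation — writing the objective as $\Eb_{\calL^{*}}[\log(\rd\calL/\rd\tilde{\calL}^{(i+1)})]$, substituting the Girsanov density \eqref{radon_param} with $\vu=\vu^{(i+1)}$, restricting to the class of step functions \eqref{eq:ParameterControl2}, and differentiating — yields the analogous first-order condition
\begin{equation*}
\vu^{*(i+1)}_{j} \;=\; \frac{1}{\sqrt{\rho}\,\Delta t}\,\vM^{-1}\,\Eb_{\calL^{*}}\!\left[\int_{t_j}^{t_{j+1}}\bar{\vm}(t)\right].
\end{equation*}
At this stage the integrand still involves $\bar{\vm}$ (i.e.\ increments of the original $W$) and the expectation is under the inaccessible measure $\calL^{*}$.

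Next I would perform an importance-sampling change of measure so that both $\calL^{*}$ and $\bar{\vm}$ are written in terms of quantities generated by \eqref{eq:SPDEs_Control_Iter_i}. Using the chain rule
\begin{equation*}
\frac{\rd\calL^{*}}{\rd\calL^{(i)}} \;=\; \frac{\rd\calL^{*}}{\rd\calL}\cdot\frac{\rd\calL}{\rd\calL^{(i)}},
\end{equation*}
the first factor is the Gibbs weight $\exp(-\rho J)/\Eb_{\calL}[\exp(-\rho J)]$ from \eqref{eq:Gibbs}, while the second is the inverse of the Girsanov density \eqref{radon_i}, i.e.\ $\exp(-\rho\,\zeta(\calU^{(i)}))$ by the very definition \eqref{eq:zeta}. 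The two exponentials collapse into $\exp(-\rho\tilde J)$, and a quick identity $\Eb_{\calL}[\exp(-\rho J)] = \Eb_{\calL^{(i)}}[\exp(-\rho J)\,\rd\calL/\rd\calL^{(i)}] = \Eb_{\calL^{(i)}}[\exp(-\rho\tilde J)]$ normalizes the denominator, giving $\rd\calL^{*}/\rd\calL^{(i)} = \exp(-\rho\tilde J)/\Eb_{\calL^{(i)}}[\exp(-\rho\tilde J)]$.

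The remaining step is to rewrite $\bar{\vm}$ in terms of $\bar{\vm}^{(i)}$. From \eqref{w_hat_i}, $\rd W = \rd W^{(i)} + \sqrt{\rho}\,\calU^{(i)}(t)\rd t$, so taking inner products with each $m_\ell$ and using definitions \eqref{small_m} and \eqref{big_M} yields $\bar{\vm}(t) = \bar{\vm}^{(i)}(t) + \sqrt{\rho}\,\vM\vu^{(i)}(t)\,\rd t$. Integrating over $[t_j,t_{j+1}]$ and using that $\vu^{(i)}$ is piecewise constant gives $\int_{t_j}^{t_{j+1}}\bar{\vm}(t) = \delta\vu^{(i)}_{j} + \sqrt{\rho}\,\vM\,\vu^{(i)}_{j}\,\Delta t$. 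Substituting back, the deterministic term pulls out of the normalized expectation to contribute precisely $\vu^{(i)}_{j}$, leaving the stochastic term as \eqref{eq:Iterative_optimalvariation1}.

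The main obstacle is bookkeeping the two changes of measure consistently: one has to verify that the Girsanov weight between $\calL$ and $\calL^{(i)}$ is exactly $\exp(-\rho\zeta^{(i)})$ with the sign/$\sqrt{\rho}$ convention used in \eqref{radon_i} and \eqref{eq:zeta}, and that the drift correction in $\rd W = \rd W^{(i)} + \sqrt{\rho}\calU^{(i)}\rd t$ produces the clean additive splitting $\vu^{*(i+1)}_j = \vu^{*(i)}_j + (\text{update})$ rather than a rescaled combination. Once that algebra is done carefully, the rest is a direct transcription of Lemma \ref{TheoremSOC}.
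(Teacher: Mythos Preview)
Your proposal is correct and follows essentially the same approach as the paper: start from the optimal-control formula of Lemma~\ref{TheoremSOC}, perform an importance-sampling change of measure from $\calL$ to $\calL^{(i)}$ via the Radon--Nikodym chain rule (absorbing the Girsanov correction into $\tilde J = J + \zeta$), and then split $\int_{t_j}^{t_{j+1}}\bar{\vm}(t)$ into $\delta\vu^{(i)}_j + \sqrt{\rho}\,\Delta t\,\vM\vu^{(i)}_j$ so that the deterministic piece produces the additive $\vu^{*(i)}_j$ term. You are in fact somewhat more explicit than the paper about the normalization identity $\Eb_{\calL}[\exp(-\rho J)] = \Eb_{\calL^{(i)}}[\exp(-\rho\tilde J)]$ and about how the deterministic drift term pulls out of the normalized expectation, which the paper leaves implicit.
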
 
\begin{proof} In order to derive the iterative scheme, we perform one step of importance sampling. In particular,  instead of sampling from the uncontrolled SPDE \eqref{eq:SPDEs_NoControl}  to evaluate  
the expectation in   \eqref{eq:optimalcontrol}  we sample  using the  controlled SPDE  \eqref{eq:SPDEs_Control_Iter_i}. In addition, we  modify   \eqref{eq:optimalcontrol}  so that to   perform the appropriate change of measure between the uncontrolled version of infinite dimensional dynamics and the controlled version at iteration  $ i $.
Next we modify  equations    \eqref{eq:optimalcontrol}  by considering \eqref{radon_i} and \eqref{w_hat_i}.    
\begin{equation}\label{eq:Iterative_optimalvariation}
\vu^{(i+1)}_{j} =      \frac{1}{\sqrt{\rho} \Delta t}   \vM^{-1}  \Eb_{\calL^{(i)}}   \bigg[ \frac{\rd \calL}{\rd  \calL^{(i)}}  \frac{\exp( - \rho J) }{  \Eb_{\calL} [ \exp( - \rho J)] } \delta \vu_{j}    \bigg], 
\end{equation}
Regarding $\delta \vu_{j}$, one has:
\begin{equation*}
\begin{split}
&\bigg(\int_{t_j}^{t_{j+1}}\bar{ \vm}(t)\bigg)_{l}=\int_{t_j}^{t_{j+1}}\langle m_l,\rd W(t)\rangle_{U_{0}}=\\
&\int_{t_j}^{t_{j+1}}\langle m_l,\rd W^{(i)}(t)+\sqrt{\rho}\calU^{(i)}(t)\rd t\rangle_{U_{0}}= \int_{t_j}^{t_{j+1}}\langle m_l,\rd W^{(i)}(t)\rangle_{U_{0}}+\sqrt{\rho}\bigg[\langle m_{l},m_{1}\rangle_{U_{0}},...,\langle m_{l},m_{N}\rangle_{U_{0}}\bigg]\vu_{j}^{(i)}\Delta t.
\end{split}
\end{equation*}
It follows that:
\[\int_{t_j}^{t_{j+1}}\bar{ \vm}(t)= \int_{t_j}^{t_{j+1}}\bar{ \vm}^{(i)}(t)+\sqrt{\rho}\Delta t\vM \vu_{j}^{(i)}. \]

Substitution of  the Radon-Nikodym derivative yields the final result in \eqref{eq:Iterative_optimalvariation1}. Note that under $\mathbb{Q}^{(i)}$ renders $W^{(i)}$ a standard $Q$-Wiener process.
\end{proof}

For the purposes of implementation we will approximate the optimal controls \eqref{eq:Iterative_optimalvariation1} as:
%    \begin{equation}\label{eq:Iterative_optimalvariation_000}
% \vu^{*}_{j}{}^{(i+1)} \approx  \vu^{*}_{j}{}^{(i)}   +   \frac{1}{\sqrt{\rho} \Delta t}   \vM^{-1}  \Eb_{\calL^{(i)}}   \bigg[   \frac{\exp( - \rho \tilde{J}(h)) }{  \Eb_{\calL^{(i)}} [\exp( - \rho  \tilde{J}(h))] }  \delta \tilde{\vu}^{(i)}_{j}    \bigg], 
% \end{equation}
% where
 \begin{equation}\label{eq:optimalvariation_1}
( \delta \tilde{\vu}^{(i)}_{j} )_{l}:=   \sum_{s=1}^{R}\langle m_{l},\sqrt{\lambda}_{s}e_{s}\rangle_{U_{0}}\Delta\beta^{(i)}_{s}(t_j),
 \end{equation}
where $\Delta\beta^{(i)}_{s}(t_j)\sim\mathcal{N}(0, \Delta t)$ under $\mathbb{Q}^{(i)}$. %The derivation of this approximation together with and detailed  discussion  on the algorithms are given in the supplementary.
 Next we discuss the application of the iterative stochastic control on two examples of SPDEs. 

%\textbf{State pseudocode, kernel for non-cylindrical, receding horizon, explain why receding horizon}

% \section{Connections to Dynamic Programming} 
% \label{sec:Connections}
  
%  In this section we discuss the connection between the free energy and relative entropy  relation  and stochastic dynamic programming for the case of stochastic systems in infinite dimensional spaces. The free energy satisfies the HJB equation and in this sense satisfies the dynamic programming principle. 
 
%  \begin{theorem}
% (Connections to Dynamic Programming)  Consider the free energy function in  $V(t, X(t))  \in    C_{b}^{1,2}([0,T] \times H)  $. This free energy satisfies the HJB for the infinite dimensional stochastic control problem. 
% \begin{equation}
% V(t,X(t))=\min_{\calU}\Eb \bigg[\phi(T,X(T)) + \int_{t}^{T} \bigg( \ell(\tau,X(\tau)) +|\calU| \bigg) \rd \tau  \bigg],
% \end{equation}
% subject to the infinite dimensional stochastic dynamics: 
% \begin{equation}
% \rd X(t) = \calA \rd t + F(X) \rd t + B(X) \bigg( \calU \rd t + \frac{1}{\sqrt{\rho}} \rd W(t) \bigg).
% \end{equation}
% \end{theorem}
%  \begin{proof}
%  The proof is given in the supplementary material and it relies on the generalization of the Feynman-Kac lemma for the case of infinite dimensional spaces. 
%  \end{proof}

  \section{Experiments}
  \label{sec:Experiments}
  In this section, we  present simulation  results on two infinite dimensional stochastic systems. The first systems is the stochastic Heat equation  and the second system is the Nagumo SPDE. The iterative stochastic optimal control is used for open loop  trajectory  optimization and for MPC. \\ %We refer to supplementary  for additional plots related to the convergence of the experiments below and videos demonstrating the dynamic behavior of stochastic control when applied to systems in consideration. \\
\textbf{Heat SPDE:} The 1-D  stochastic heat equation with homogeneous Dirichlet boundary conditions can be used to simulate the diffusion of heat along a rod insulated on the sides and exposed to freezing conditions at the end points. Our experiments consisted of achieving desired temperature levels at specific positions along a rod in the presence of space-time stochastic disturbing forces. As seen in Fig. \ref{fig:heat_eqn}, the MPC  has robust performance compared to open-loop controller with the mean temperature profile closer to the desired temperature levels and tighter sigma bounds in the presence of space-time white noise. 
\begin{figure}[!htb]
\minipage{0.3\textwidth}
  \includegraphics[width=\linewidth]{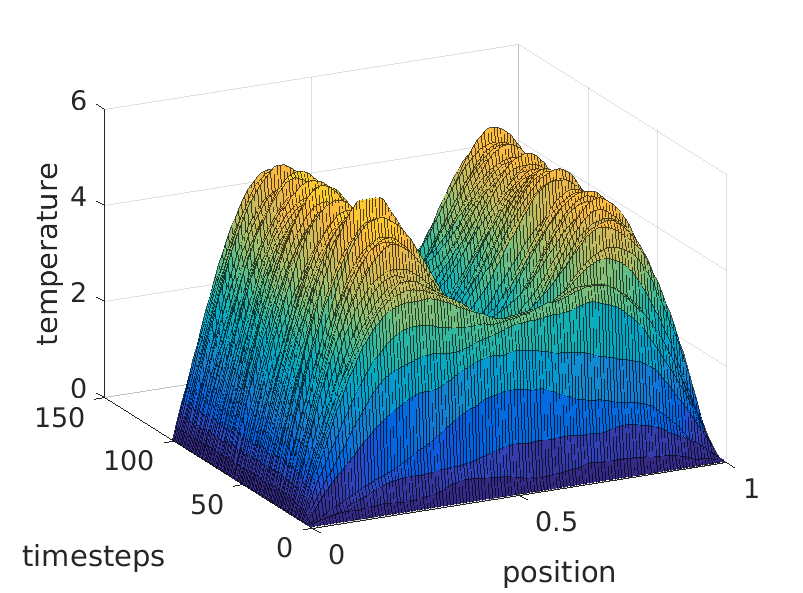}
\endminipage\hfill
\minipage{0.3\textwidth}
  \includegraphics[width=\linewidth]{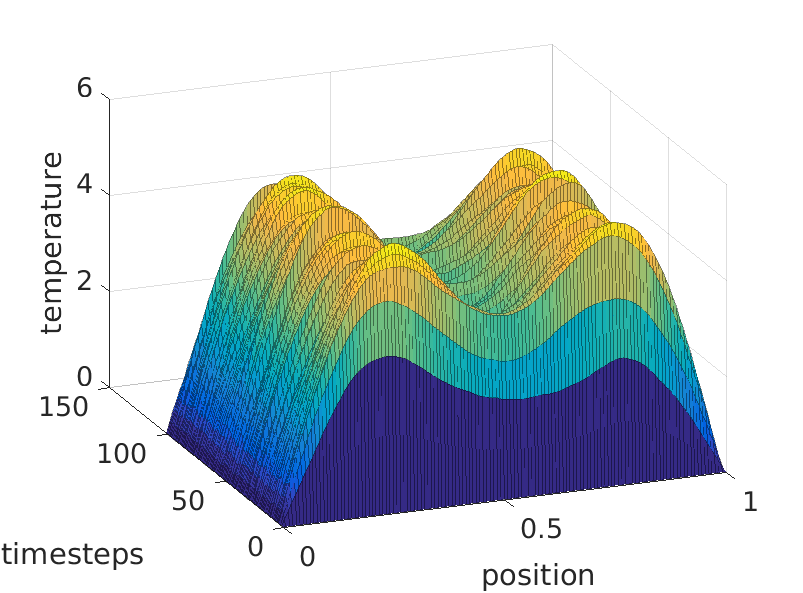}
\endminipage\hfill
\minipage{0.40\textwidth}%
  \includegraphics[width=\linewidth]{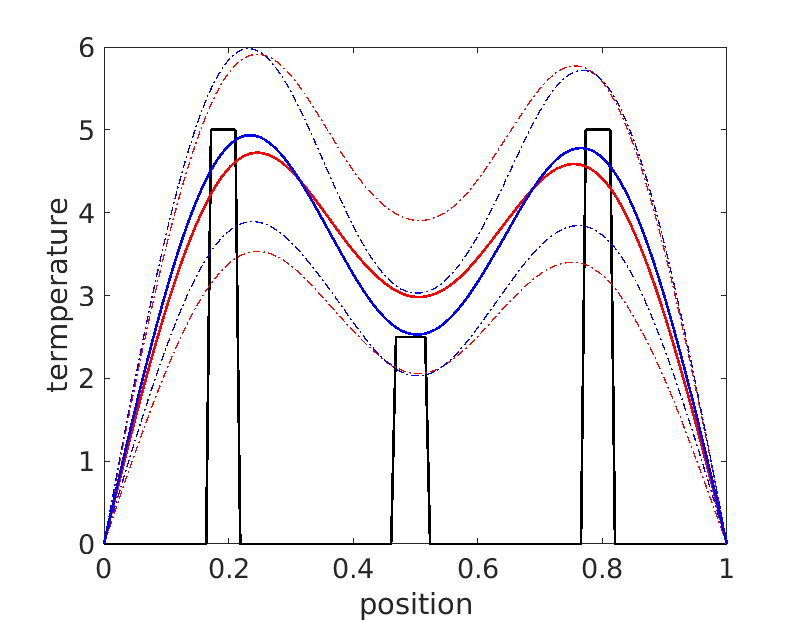}
\endminipage
\caption{Plots showing the evolution of temperature profile along a rod for Model Predictive Controller \textit{(left)}, Open-Loop controller (\textit{middle}) and the mean temperature profile for both controllers over an episode within $\pm$1-$\sigma$ bounds \textit{(right)}. Curves in red record performance of open-loop controller, those in blue record MPC. Black bars indicate desired temperature levels.}
\label{fig:heat_eqn}
\end{figure}

\textbf{Nagumo SPDE:} The stochastic Nagumo equation with homogeneous Neumann boundary conditions is a reduced model for wave propagation of the voltage $u$ in the axon of a neuron \cite{lord_powell_shardlow_2014}. The Nagumo equation is expressed as follows: 
\begin{align*}
& u_t = \epsilon u_{xx} + u(1-u)(u-\alpha) + \sigma dW(t), \quad u_x(t,0) = u_x(t,a) = 0, u(0,x) = (1+\exp(-(2-x)/\sqrt[]{2}))^{-1}
\end{align*}
The parameter $\alpha$ determines the speed of a wave traveling down the length of the axon and $\epsilon$ the rate of diffusion. From simulating the deterministic version of the above pde for $a=10,\,\epsilon=1$ and $\alpha=-0.5$, we observed that it requires about 10 seconds for the wave to propagate to the end of the axon %(refer to supplementary material for the plot). 
 An open-loop infinite-dimensional controller was employed to accelerate the propagation of the voltage and to suppress the propagation of the voltage in about 2.5 seconds. The plots shown in the figure below demonstrate the achievement of desired behavior in the axon. 
\begin{figure}[!htb]
\minipage{0.5\textwidth}
  \includegraphics[width=\linewidth]{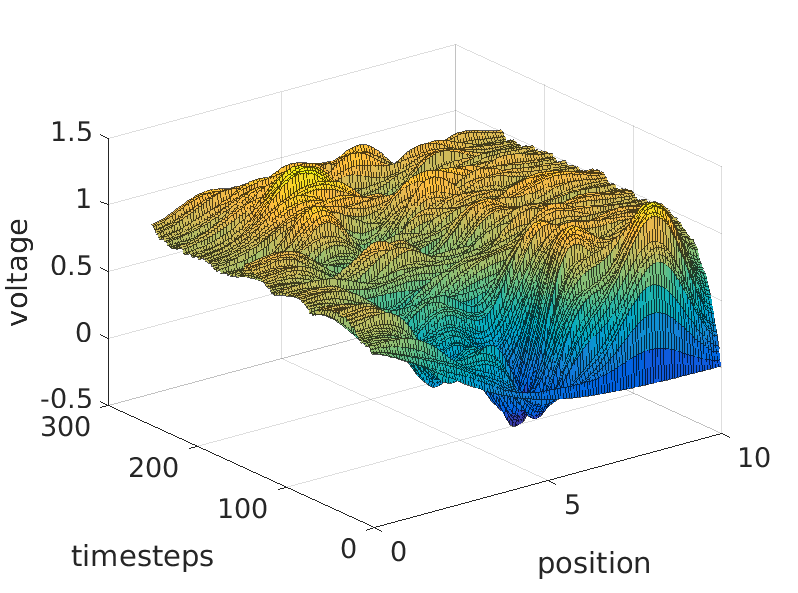}
\endminipage\hfill \quad
\minipage{0.5\textwidth}%
  \includegraphics[width=\linewidth]{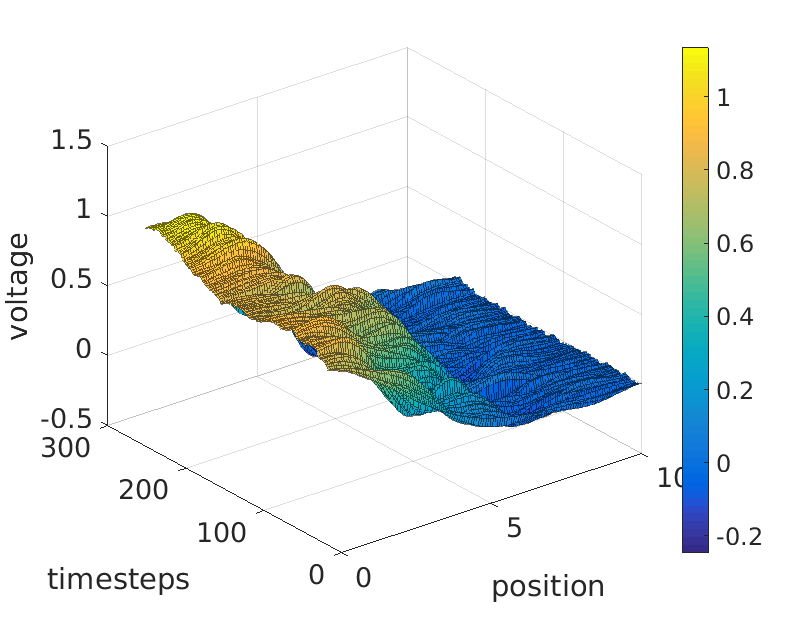}
\endminipage
\caption{Plots showing the acceleration of voltage propagation in an axon \textit{(left)} and suppression of voltage propagation in an axon \textit{(right)}}.
\label{fig:nagumo}
\end{figure}
  \vspace{-0.6cm}
\section{Conclusions}
\label{sec:conclusions}
 \vspace{-0.2cm}
  We  present  an information theoretic formulation for stochastic  optimal control of infinite dimensional dynamical systems.   The analysis relies on concepts drawn from the theory of stochastic calculus in Hilbert spaces,  the   relative entropy and free energy  relation and its connections to stochastic dynamic programming.  The  resulting algorithm can be used for stochastic trajectory optimization and MPC for a   large class of systems with dynamics governed by SPDEs. The work in this paper is a generalization of the path integral and information theoretic control to infinite dimensional spaces and is a  significant step towards the development of scalable and real time control algorithms for infinite dimensional stochastic  systems. Future directions involve, the theoretical analysis of the convergence,  application to higher order infinite dimensional systems, fully nonlinear SPDEs and application to real systems.

\bibliographystyle{unsrtnat}
\bibliography{ref.bib}

\end{document}